\newtheorem {thm}{Theorem}
\newtheorem* {thm*}{Theorem}
\newtheorem* {cor*}{Corollary}
\newtheorem {lem}[thm]{Lemma}
\newtheorem {defi}[thm]{Definition}
\newtheorem {rem}[thm]{Remark}
\theoremstyle{definition}
\newtheorem* {conj*}{Conjecture}
\newtheorem* {defi*}{Definition}
\DeclareMathOperator{\ord}{ord}
\DeclareMathOperator{\Gal}{Gal}
\DeclareMathOperator{\Hom}{Hom}
\DeclareMathOperator{\lcm}{lcm}
\DeclareMathOperator{\dens}{dens}
\newcommand{\Q}{\mathbb{Q}}
\renewcommand{\a}{\alpha}
\renewcommand{\b}{\beta}
\newcommand{\g}{\gamma}
\newcommand{\p}{\mathfrak{p}}
\newcommand{\e}{\emph}
\author{Antonella Perucca}
\title{The order of the reductions of an algebraic integer}
\date{ }
\begin{document}
\maketitle

\begin{abstract}
Let $K$ be a number field, and let $a\in K^\times$. Fix some prime number $\ell$.
We compute the density of the following set: the primes $\mathfrak p$ of $K$ such that the multiplicative order of the reduction of $a$ modulo $\p$ is coprime to $\ell$ (or, more generally, has some prescribed $\ell$-adic valuation). 
We evaluate the degree over $K$ of extensions of the form  $K(\zeta_{\ell^m}, \sqrt[\ell^n]{a})$ with $n\leq m$, which are obtained by adjoining a root of unity of order $\ell^m$ and the $\ell^n$-th roots of $a$, as this is needed for computing the above density.
\end{abstract}

\section{Introduction}

Let $K$ be a number field, and let $a\in K^\times$. Fix some prime number $\ell$. The problem that we solve in this paper is the following: computing the density of the set of primes $\p$ of $K$ such that the reduction of $a$ modulo $\p$ has multiplicative order coprime to $\ell$ (or, more generally, has some prescribed $\ell$-adic valuation). This is a natural density and its value may be expressed in terms of the degrees of some cyclotomic and cyclic Kummer extensions:
\begin{equation*}
D_{\ell}(a)= \sum_{n\geq 0}\;\Big(\; \frac{1}{[K(\zeta_{\ell^n}, \sqrt[\ell^{n}]{a}):K]}-\frac{1}{[K(\zeta_{\ell^{n+1}},\sqrt[\ell^{n}]{a}):K]}\; \Big)
\end{equation*}
where $\zeta_{\ell^m}$ is a root of unity of order $\ell^m$ and $\sqrt[\ell^{n}]{a}$ is  an $\ell^n$-th root of $a$.
We evaluate this sum and provide formulas which depend only on some meaningful parameters. We may suppose that $a$ is not a root of unity, because in this case the density is trivially either $1$ or $0$ according to whether the order of $a$ is coprime to $\ell$ or not. In particular we prove:

\begin{thm}\label{intro1}
If $\zeta_{\ell}\notin K$ and if $\zeta_{\ell^2}\notin K(\zeta_{\ell})$,  then we have 
\begin{equation*}
D_{\ell}(a)=1-\frac{1}{[K(\zeta_{\ell}):K]}\cdot \frac{\ell^{1-d}}{\ell+1}
\end{equation*}
where $d\geq 0$ is the greatest integer such that $a$ is an $\ell^d$-power in $K$. 
\end{thm}

\begin{thm}\label{intro2}
If $\ell$ is odd and $\zeta_{\ell}\in K$, or if $\ell=2$ and $\zeta_{4}\in K$ then we have
$$D_{\ell}(a)=1-\frac{1}{\ell^{n}\cdot (\ell+1)} \qquad \text{or}\qquad D_{\ell}(a)=\frac{1}{\ell^{n}\cdot (\ell+1)}$$
for some integer $n\geq 0$.
\end{thm}

If $\ell$ is odd or if $\zeta_4\in K$ the cyclotomic extension $K(\zeta_{\ell^m})$ over $K$ is cyclic for every $m\geq 1$. In this case the formulas that we get are extremely simple: about the field $K$, we only need to know the degree over $\mathbb Q$ of the intersection of $K$ with the $\ell$-th cyclotomic tower; about the element $a$ what matters are only two integer numbers expressing the divisibility of $a$ in $K^\times$. 
The key property is the following: for $\ell$ odd or if $\zeta_4\in K$, we have $$[K(\zeta_{\ell^m}, \sqrt[\ell^{n}]{a}):K(\zeta_{\ell^m})]=\ell^n\qquad \textrm{ for every $m,n$ with $n\leq m$}$$ provided that $a$ is strongly indivisible, by which we mean that $a\xi$ has no $\ell$-th roots in $K$ for every root of unity $\xi\in K$. If $\ell=2$ and $\zeta_4\notin K$ there is one special case to consider, because it may happen that the square roots of $a$ are contained in $K(\zeta_{2^m})$.

The history of the problem addressed in this paper goes back to Hasse in the 1960's, who worked with $K=\mathbb Q$, see  \cite{Hasse1, Hasse2}. Our general approach gives also insight on the many cases which were known to occur for $K=\mathbb Q$ and $\ell=2$, for example why $D_2(a)=7/24$ for $a=\pm 2$ while $D_2(4)=7/12$ and $D_2(-4)=1/3$. In particular there has been a lot of work in the case $K=\mathbb Q$ and $\ell=2$. Given a sequence of the form $c^k +d^k$ with $c$ and $d$ integers, one can ask for the rational primes $p$ such that $p$ divides $c^k +d^k$ for some $k$: this is the same as asking for the order of $\frac{c}{d}$ to be even modulo $p$. There are also generalizations to Lucas sequences, this corresponds to taking $K$ to be quadratic. For this kind of questions, we refer to the survey of Ballot~\cite{Ballot} and to the book by Everest and others~\cite{Everest}.
It is important to notice that Jones and Rouse \cite{Jones_Rouse} were able to solve our problem  (in the generic case) in the more general context of commutative algebraic groups, by using techniques from dynamical systems. We also signal the generalization where $\ell$ is replaced by some square-free integer: this question has been addressed by Wiertelak, Pappalardi and Moree if $K=\mathbb Q$, see \cite{Wiertelak, Pappalardi, Moree_divisibility}; the case of a  number field  will be included in \cite{Debry}. 
Another generalization is considering the primes $\mathfrak p$ of $K$ such that the reduction of $a$ modulo $\p$ has multiplicative order in some arithmetic progression, see the papers by Moree and by Ziegler~\cite{Moree_residue, Ziegler}.
For an extensive description of related problems and numerous additional references, we refer to the survey of Moree~\cite{MoreeArtin}, and in particular sections~9.2~to~9.4. 

The structure of the paper is as follows: Section~\ref{Kummersec} contains preliminaries on Kummer extensions. In section~\ref{divprop} we find a convenient way of expressing the divisibility properties of $a$ and determining the parameters that will appear in the subsequent formulas. In section~\ref{secdeg} we evaluate the degree of $K(\zeta_{\ell^m}, \sqrt[\ell^{n}]{a})$ over $K(\zeta_{\ell^m})$ for $n\leq m$, where $K$ is any field and $\ell$ is coprime to the characteristic. In section~\ref{formula1} we restrict to $K$ a number field: we give the formulas for the density and provide several examples, see tables~\ref{Q3},~\ref{Q2},~\ref{Q22}.

It is a pleasure to thank Hendrik Lenstra for essential contributions to this paper, and Chantal David, Peter Jossen, Falko Lorenz, Pieter Moree, Andrzej Schinzel, Jean-Pierre Serre and the referee for useful remarks and comments.

\subsection*{Notation}

In this paper, $K$ is a field and $\ell$ is a prime number different from the characteristic of $K$: only in section~\ref{formula1} we suppose that $K$ is a number field. For $m\geq 1$, we write $\zeta_{\ell^m}$ for a root of unity of order $\ell^m$ and $\mu_{\ell^m}$ for the group of $\ell^m$-th roots of unity. Moreover, we write $K_{\ell^m}:=K(\zeta_{\ell^m})$ and $K_{\ell^\infty}:=\cup_{m\geq 1} K_{\ell^m}$. We denote by $K^\times$ the multiplicative group of $K$.

\section{Cyclic Kummer extensions}\label{Kummersec}

Suppose that $\a$ is algebraic over $K$, and that there is $n\geq 1$ such that the following holds:  $\a^{\ell^n}=a$ for some $a\in K^\times$ and the $\ell^n$-th roots of unity are contained in $K$. We then write $K(\a)=K(\sqrt[\ell^n]{a})$.  The extension $K(\a)/K$ is obtained by adjoining one (or equivalently, all) $\ell^n$-th roots of $a$. It is a Galois extension of $K$, being the splitting field of $X^{\ell^n}-a$. The Galois group is cyclic of order dividing $\ell^n$. The extension $K(\a)/K$ is called a cyclic Kummer extension. We now state an important result, from which the considerations below easily follow (see \cite[ch.~VI, \S6~and~\S8]{Lang}, \cite[\S2]{Birch}, \cite[ch.~IV, \S3]{Neukirch}):

\begin{thm}\label{KTHM}
If $n\geq 1$ and $\zeta_{\ell^n}\in K$, then the following holds:
\begin{enumerate}
\item A finite Galois extension of $K$ whose Galois group is cyclic of order $\ell^n$ is a cyclic Kummer extension of the form $K(\sqrt[\ell^n]{a})$ for some $a\in K^\times$.
\item If $a, b\in K^{\times}$ are such that $K(\sqrt[\ell^n]{a})=K(\sqrt[\ell^n]{b})$ has degree $\ell^n$ over $K$, then we have $a=b^t \g^{\ell^n}$ for some $\g\in K^\times$ and for some integer $t$ coprime to $\ell$.
\end{enumerate}
\end{thm}

Let $L=K(\sqrt[\ell^n]{a})$ be a cyclic Kummer extension of $K$ of degree $\ell^n$. If $m\leq n$, the field $K(\sqrt[\ell^n]{a^{\ell^{(n-m)} }})=K(\sqrt[\ell^m]{a})$ is the unique subextension of $K(\sqrt[\ell^n]{a})$ of degree $\ell^m$, and it is a cyclic Kummer extension. On the other hand if $t$ is coprime to $\ell$, we have $K(\sqrt[\ell^n]{a^{t}})=K(\sqrt[\ell^n]{a})$.

Now allow the degree of $L/K$ to be a proper divisor of $\ell^n$. Consider the subgroup of $K^{\times}$ consisting of the elements which have some (hence all) $\ell^n$-th roots in $L$. This subgroup contains $K^{\times \ell^n}$ so in fact we may associate to $L$ a subgroup $\Delta$ of $K^{\times}/K^{\times \ell^n}$.
The group $\Delta$ is cyclic and it is generated by the class of $a$. Note, the function $L\mapsto \Delta$ maps cyclic Kummer extensions of $K$ of degree $\ell^d$ to cyclic subgroups of $K^{\times}/K^{\times \ell^n}$ of order $\ell^d$, and it is a bijection. 
Moreover, there is a group isomorphism
$$\Delta\simeq \Hom(\Gal(L/K), \mu_{\ell^n})$$
such that the class of $a$ is mapped to the character $\chi_{a}:\sigma\mapsto \a^{\sigma}\cdot\alpha^{-1}$, where $\a$ is such that $\a^{\ell^n}=a$ (this character does not depend on the choice of $\a$).

The degree $[L:K]$ divides $\ell^n$ and it can be equivalently characterized as follows: the order of $\chi_a$; the order of the class of $a$ in $K^{\times}/K^{\times \ell^n}$; the smallest integer $\ell^d$ such that $\a^{\ell^d}\in K^{\times}$ for some $\a$ satisfying $\a^{\ell^n}=a$ (this does not depend on  the choice of $\a$); the smallest integer $\ell^d$ such that $a$ has some $\ell^{(n-d)}$-th roots in $K$. 

\begin{lem}\label{KTdifferent}\label{KTproduct}
Let $a, b\in K^{\times}$. Let $n\geq 1$ and $\zeta_{\ell^n}\in K$.  If the fields $K(\sqrt[\ell^n]{a})$ and $K(\sqrt[\ell^n]{b})$ are linearly disjoint over $K$ or the $\ell$-adic valuation of their degrees over $K$ are different, then we have
$$\big[K(\sqrt[\ell^n]{ab}): K\big]=\lcm \big( [K(\sqrt[\ell^n]{a}): K], [K (\sqrt[\ell^n]{b}): K] \big)\,.$$
\end{lem}
\begin{proof}
Write $K_a:=K(\sqrt[\ell^n]{a})$, $K_b:=K(\sqrt[\ell^n]{b})$, $K_{ab}:=K(\sqrt[\ell^n]{ab})$, and call respectively $d_a$, $d_b$ and $d_{ab}$ the order of $\chi_{a}$, $\chi_{b}$ and $\chi_{ab}$. To prove that $d_{ab}$ divides $\lcm(d_a, d_b)$, it suffices to notice that for every $\sigma\in \Gal(\bar{K}/K)$ we have
$$\chi_{ab}(\sigma|_{K_{ab}})=\chi_{a}(\sigma|_{K_{a}})\cdot \chi_{b}(\sigma|_{K_{b}})\,.$$
We are left to show that for the $\ell$-adic valuation $v_{\ell}(d_{ab})\geq \max\{v_{\ell}(d_{a}), v_{\ell}(d_{b})\}$ holds. Without loss of generality suppose $v_{\ell}(d_{a})\geq v_{\ell}(d_{b})$. By assumption $K_a$ and $K_b$ are linearly disjoint over $K$ or $v_{\ell}(d_{a})> v_{\ell}(d_{b})$. Then we can find $\sigma\in \Gal(\bar{K}/K)$ such that the order of $\chi_{a}(\sigma|_{K_{a}})$ has $\ell$-adic valuation $v_{\ell}(d_a)$ while the order of $\chi_{b}(\sigma|_{K_{b}})$ has  $\ell$-adic valuation respectively $0$ or at most $v_{\ell}(d_{b})$. In both cases, the order of $\chi_{ab}(\sigma|_{K_{ab}})$ has $\ell$-adic valuation $v_{\ell}(d_{a})$. \end{proof}

\begin{lem}\label{KTtower}
Let $a\in K^{\times}$. Let $n\geq 1$ and $\zeta_{\ell^n}\in K$.  Either $K(\sqrt[\ell^n]{a})= K$ holds, or there is some smallest integer $h$ with $1\leq h\leq n$ such that $K(\sqrt[\ell^h]{a})\neq K$ and we have 
$$[K(\sqrt[\ell^n]{a}): K]=\ell^{n-h+1}\,.$$
\end{lem}
\begin{proof}
By assumption, $\chi_{a^{\ell^{(n-h+1)}}}$ is the trivial character while $\chi_{a^{\ell^{(n-h)}}}$ is non-trivial. We deduce that the second has order $\ell$ hence $\chi_a$ has order $\ell^{n-h+1}$.
\end{proof}

We will apply in several occasions the following theorem of Schinzel:

\begin{thm}[{\cite[thm.~2]{Schinzel_abelianbinomials}}]\label{Schinzel}
Let $a\in K^{\times}$. For $n\geq 1$, the extension $K(\zeta_{\ell^n}, \sqrt[\ell^n]{a})/K$ is abelian if and only if $a^{\ell^m}=\gamma^{\ell^n}$ for some $\gamma\in K^{\times}$ and for some $m\leq n$ such that $\zeta_{\ell^m}\in K$.
\end{thm}

Note, a shorter proof for Schinzel's result was given independently by Stevenhagen and by W\'ojcik \cite{Lenstra_H, Wojcik}.

\section{Divisibility properties}\label{divprop}

In this section we investigate the divisibility properties of the elements of $K$:

\begin{defi}\label{l_strongly_indivisible} 
Let $a\in K^\times$. We say that $a$ is \emph{divisible} (in $K$) if $a$ has some $\ell^n$-th root in $K$ for every $n\geq 1$. We say that $a$ is \emph{strongly indivisible} (in $K$) if $a\xi$ has no $\ell$-th roots in $K$ for every root of unity $\xi\in K\cap \mu_{\ell^\infty}$, or equivalently for every root of unity $\xi\in K$. 
\end{defi}

 The roots of unity of order coprime to $\ell$ are divisible. The roots of unity of order $\ell^m$ for some $m>0$ are divisible if and only if $K=K_{\ell^\infty}$, and they are never strongly indivisible.
 
\begin{lem}~\label{noSTARell} Let $a\in K^\times$. 
\begin{enumerate}
\item If $K\neq K_{\ell}$ or $K=K_{\ell^\infty}$, then either $a$ is divisible in $K$ or it is of the form $a=b^{\ell^d}$ for some $d\geq 0$ and for some $b\in K$ strongly indivisible. The integer $d$ does not depend on the choice of $b$.

\item If $K=K_{\ell}\neq K_{\ell^\infty}$, let $t>0$ be the greatest integer such that $K=K_{\ell^t}$. Then exactly one of the following holds:
\begin{enumerate}
\item[(i)] $a=b^{\ell^d}$ for some $d\geq 0$ and for some $b\in K$ strongly indivisible;
\item[(ii)] $a=b^{\ell^d}\xi $ for some $d> 0$, for some $b\in K$ strongly indivisible and for some root of unity $\xi\in K$ of order $\ell^r$ with $r>\max(0, t-d)$;
\item[(iii)] $a\xi$ is divisible in $K$ for some uniquely determined root of unity $\xi\in K\cap \mu_{\ell^\infty}$.
\end{enumerate}
The integers $d$ and $r$ do not depend on the choice of $b$.

\end{enumerate}
\end{lem}
\begin{proof}[Proof: Proof of 1.] Either $a$ is divisible or there is some maximal $d\geq 0$ such that we can write $a=b^{\ell^d}$ for some $b\in K$. Then $b$ is strongly indivisible by the maximality of $d$. If we have $a=\b^{\ell^\delta}$ with $\b\in K$ and $\delta<d$ we can easily see that $\b$ is not strongly indivisible. 

\emph{Proof of 2.} Suppose first that there is some maximal $d\geq 0$ such that we can write $a=b^{\ell^d}\xi$ for some $b\in K$ and for some root of unity $\xi\in K\cap \mu_{\ell^\infty}$. Then $b$ is strongly indivisible by the maximality of $d$. If the order of $\xi$ divides $\ell^{t-d}$  then $\xi$ has $\ell^d$-roots in $K$ so up to replacing $b$ we may suppose $\xi=1$.
If $a=\beta^{\ell^\delta}\zeta$ for some $\beta\in K$ strongly indivisible and some root of unity $\zeta\in K\cap \mu_{\ell^\infty}$ we cannot have $\delta<d$ hence $\delta=d$. Having $a=\beta^{\ell^d}\zeta=b^{\ell^d}\xi$ implies that $\xi\zeta^{-1}$ has $\ell^{d}$-th roots in $K$. Under the assumption that $\xi, \zeta$ are either $1$ or have order greater than $\max(0, t-d)$ we deduce that $\xi, \zeta$ have the same order.

Now suppose that there is no such maximal $d$ as above. Since the number of roots of unity in $K\cap \mu_{\ell^\infty}$ is finite we may find infinitely many integers $h\geq 1$ such that $a=b_h^{\ell^h}\zeta$ for some $b_h\in K^\times$ and for some fixed $\zeta\in K\cap \mu_{\ell^\infty}$. Then $a\xi$ where $\xi=\zeta^{-1}$ is divisible in $K$.
Since $K\neq K_{\ell^\infty}$ the root of unity $\xi$ is uniquely determined. \end{proof}

\section{Evaluating the degree of Kummer extensions}\label{secdeg}

We want to evaluate the degree $[K_{\ell^m}(\sqrt[\ell^n]{a}):K_{\ell^m}]$ for $m\geq n>0$ and $a\in K^\times$. If $a$ is divisible in $K$ then the above Kummer extension is trivial. If $K=K_{\ell}\neq K_{\ell^\infty}$ it may happen that $a$ is not divisible in $K$ but $a\xi$ is divisible in $K$ for some root of unity $\xi\in K$: in this case we may replace $a$ by $\xi^{-1}$ and apply lemma~\ref{cyclotomic}. The remaining cases are discussed in theorems~\ref{odd_or_i}~and~\ref{case_inotK}: in particular, $a$ is not a root of unity.

\subsection{Cyclotomic extensions}

Since cyclotomic extensions are extremely well-known (see for example \cite{Birch, Lang, Neukirch, Washington}) the proof of the following two lemmas is left as an exercise for the reader. Note, the extension $\Gal(K_{\ell^m}/K)$ may fail to be cyclic only in characteristic zero and if $\ell=2$ and $\zeta_4\notin K$:

\begin{lem}\label{cyclotomic2}
For every $n\geq 2$ the fields
\begin{equation*}\label{++}
\Q_{2^{n}}^+:=\Q\big(\zeta_{2^n}+\zeta_{2^n}^{-1}\big) =\Q\Big(\sqrt{\zeta_{2^{n-1}}+\zeta_{2^{n-1}}^{-1}+2}\Big)
\end{equation*}
\begin{equation*}\label{-}
\Q_{2^{n}}^-:=\Q\big(\zeta_4\cdot(\zeta_{2^n}+\zeta_{2^n}^{-1})\big)=\Q\Big(\sqrt{-(\zeta_{2^{n-1}}+\zeta_{2^{n-1}}^{-1}+2)}\Big)
\end{equation*}
 do not contain $\zeta_4$. For every $n\geq 2$ the following diagram shows all subfields of $\Q_{2^{n+2}}$ containing $\Q_{2^{n}}^+$: every  field inclusion is marked as a composition of arrows, and each arrow corresponds to an extension of degree $2$. 

\begin{center}
$\xymatrix{ 
\Q_{2^{n}} \ar[rr] & &\Q_{2^{n+1}}\ar[rr] & & \Q_{2^{n+2}} \\ 
 &  \Q_{2^{n+1}}^- \ar [ur]  &  &  \Q_{2^{n+2}}^- \ar [ur]  \\
\Q_{2^{n}}^+ \ar[uu] \ar [ur] \ar[rr] & &\Q_{2^{n+1}}^+\ar[uu]\ar[rr]\ar[ur]  & & \Q_{2^{n+2}}^+  \ar[uu] } $
\end{center}
\end{lem}

We will also use the notation $\Q_{2^{\infty}}^+:=\cup_{n\geq 2} \Q_{2^{n}}^+$.

\begin{lem}\label{cyclotomic} Let $K\neq K_{\ell^\infty}$. 
\begin{enumerate}
\item If $\ell$ is odd, then the degree $[K_{\ell}:K]$ divides $\ell-1$. If $K_{\ell}\neq K_{\ell^\infty}$, then for every $m\geq 2$ we have
\begin{equation*}
[K_{\ell^m}:K]=[K_{\ell}:K]\cdot  \ell^{\max(0,m-t)}
\end{equation*}
where $t\geq 1$ is the greatest integer such that $K_{\ell}=K_{\ell^t}$. If $K_{\ell}= K_{\ell^\infty}$, we have $[K_{\ell^m}:K]=[K_{\ell}:K]$ for every $m\geq 2$\,.
\item If $\ell=2$, we have $K=K_2$ and the degree $[K_4:K]$ is $1$ if $\zeta_4\in K$ and it is $2$ otherwise.
If $K_{4}\neq K_{2^\infty}$, then for every $m\geq 2$ we have
\begin{equation*}
[K_{2^m}:K]=[K_{4}:K]\cdot  2^{\max(0,m-t)}
\end{equation*}
where $t\geq 2$ is the greatest integer such that $K_{4}=K_{2^t}$. We have $K_4=K_{2^\infty}$ if and only if $K$ has characteristic zero and $K\cap \Q_{2^\infty}=\Q_{2^\infty}^+$. In this case, we have $[K_{2^m}:K]=2$ for every $m\geq 2$.
\end{enumerate}
\end{lem}

\subsection{The case $\ell$ odd or $\zeta_4\in K$}

\begin{thm}\label{odd_or_i}
Let $\ell$ be odd, or let $\ell=2$ and $\zeta_4\in K$. Let $a\in K^\times$, and let $m\geq n> 0$.
\begin{enumerate}
\item If $a=b^{\ell^d}$ for some $b\in K^\times$ strongly indivisible and for some $d\geq 0$, then we have
$$[K_{\ell^m}(\sqrt[\ell^n]{a}): K_{\ell^m}]=\ell^{\max(0,n-d)}\,.$$
\item If $K=K_{\ell}\neq K_{\ell^\infty}$, let $t\geq 1$ be the greatest integer such that $K_{\ell^t}=K$ (for $\ell=2$ we are assuming $t\geq 2$). Without loss of generality suppose $m\geq t$. If $a=b^{\ell^d}\xi$ for some $b\in K^\times$ strongly indivisible, for some $d> 0$ and for some root of unity $\xi\in K$ of order $\ell^r$ with  $r>\max(0, t-d)$, then we have
$$[K_{\ell^m}(\sqrt[\ell^n]{a}): K_{\ell^m}]=\ell^{\max(0,{n-d}, {n+r-m})}\;.$$
\end{enumerate}
\end{thm}
\begin{proof}[Proof: Proof of 1.]  We have $K_{\ell^m}(\sqrt[\ell^n]{a})=K_{\ell^m}(\sqrt[\ell^{n-d}]{b})$ if $d\leq n$ and $K_{\ell^m}(\sqrt[\ell^n]{a})=K_{\ell^m}$ if $d\geq n$. Thus by lemma~\ref{KTtower} it suffices to prove that $[K_{\ell^m}(\sqrt[\ell]{b}): K_{\ell^m}]=\ell$.
This degree divides $\ell$ so suppose that it is $1$. In particular $K_{\ell}(\sqrt[\ell]{b})$ is an abelian extension of $K$. If $K\neq K_{\ell}$ then by theorem \ref{Schinzel} we deduce $b=\gamma^{\ell}$ for some $\gamma\in K$, a contradiction. We may easily deduce the same if  $K=K_{\ell^{\infty}}$. We are left to deal with the following case: there is some greatest integer $t>0$ such that $K=K_{\ell^t}$. Every finite subextension of $K_{\ell^\infty}/K$ is cyclic because $\zeta_4\in K$ if $\ell=2$.
Since $K\neq K(\sqrt[\ell]{b})\subseteq K_{\ell^\infty}$ we must have $K(\sqrt[\ell]{b})=K_{\ell^{t+1}}$.  Thus $K(\sqrt[\ell]{b})=K(\sqrt[\ell]{\zeta})$ for some root of unity $\zeta\in K$  of order $\ell^t$. By theorem~\ref{KTHM} we deduce $b=\gamma^{\ell}\zeta$, a contradiction.

\emph{Proof of 2.} Let $\g=a\xi^{-1}=b^{\ell^d}$. By the previous case, the extensions $K_{\ell^m}(\sqrt[\ell^{n}]{\g})$ and $K_{\ell^m}(\sqrt[\ell^{n}]{\xi})\subseteq K_{\ell^{\infty}}$ are linearly disjoint over $K_{\ell^m}$. Then by lemma~\ref{KTproduct} the requested degree is the least common multiple of the degrees of $K_{\ell^m}(\sqrt[\ell^{n}]{\g})$ and of $K_{\ell^m}(\sqrt[\ell^{n}]{\xi})$ over  $K_{\ell^m}$. These two degree were evaluated respectively in the previous case and in lemma~\ref{cyclotomic}.
\end{proof}

\subsection{The case $\ell=2$ and $\zeta_4\notin K$}

\begin{lem}\label{lemmalemma}\label{squarespecial}
If $\zeta_4\notin K$ and $a\in K^\times$ is strongly indivisible, then we have
$$[K_{2^{\infty}}(\sqrt{a}): K_{2^{\infty}}]=2$$
unless $K$ has characteristic zero  and the conditions
\begin{equation}\label{exceptional1}
\left\{ \begin{array}{l}
\vspace{0.2cm}
K\cap \Q_{2^{\infty}}=\Q(\zeta_{2^s}+\zeta^{-1}_{2^s}) \\
a=\pm(\zeta_{2^s}+\zeta_{2^s}^{-1}+2)\cdot \b^2 \\
\end{array}\right.
\end{equation}
hold for some $\b\in K^\times$ and for some $s\geq 2$. In this case, we have $K(\sqrt{a})\not\subseteq K_{2^{s}}$ and $K(\sqrt{a})\subseteq K_{2^{s+1}}\subseteq K_{2^{\infty}}$ and in particular we have $K_4\neq K_{2^\infty}$.
\end{lem}
\begin{proof}
If $K_4(\sqrt[4]{a})\subseteq K_{2^{\infty}}$ then $K_4(\sqrt[4]{a})$ is an abelian extension of $K$ so by theorem \ref{Schinzel} we have $a^{2}=\gamma^{4}$ for some $\gamma\in K$. This implies $a=\pm\gamma^{2}$, contradicting that $a$ is strongly indivisible. Note, $K(\sqrt{a})$ does not contain $\zeta_4$ because otherwise $K(\sqrt{a})=K(\sqrt{-1})$ and hence we would have $a=-\gamma^2$ for some $\gamma\in K$, a contradiction.
Also $K(\sqrt{a})\neq K$, again because $a$ is strongly indivisible.

If $K_{2^\infty}/K$ is cyclic then from $K_4\neq K(\sqrt{a})$ we deduce that $K(\sqrt{a})\not\subseteq K_{2^{\infty}}$. From now on assume that $K_{2^\infty}/K$ is not cyclic so in particular $K$ has characteristic zero and $K\cap \Q_{2^{\infty}}\subseteq \Q_{2^{\infty}}^+$.
Suppose that $K(\sqrt{a})\subseteq K_{2^{\infty}}$. Since $[K(\sqrt{a}): K]=2$ and $\zeta_4\notin K(\sqrt{a})$, by lemma~\ref{cyclotomic2} we have $K\cap \Q_{2^{\infty}}\neq \Q_{2^{\infty}}^+$ hence there is some integer  $s\geq 2$ satisfying $K\cap \Q_{2^{\infty}}=\Q(\zeta_{2^s}+\zeta^{-1}_{2^{s}})$. Moreover  we have 
$$K(\sqrt{a})=K\Big(\sqrt{\pm(\zeta_{2^{s}}+\zeta^{-1}_{2^{s}}+2)}\Big)\,.$$
It follows that $a=\pm(\zeta_{2^{s}}+\zeta^{-1}_{2^{s}}+2)\cdot\b^2$ for some $\b\in K^\times$. By lemma~\ref{cyclotomic2} we have $K(\sqrt{a})\subseteq K_{2^{s+1}}$ and $K(\sqrt{a})\not\subseteq K_{2^s}$. In particular we have $K(\sqrt{a})\subseteq K_{2^\infty}$ and $K_4\neq K_{2^\infty}$. 
\end{proof}

\begin{thm}\label{case_inotK}
Let $\zeta_4\notin K$, let $a\in K^\times$ and let $m\geq n> 0$. If $K_4\neq K_{2^\infty}$, let $s$ be the greatest integer such that $K_4=K_{2^{s}}$.
\begin{enumerate}
\item If $a=b^{2^d}$ for some $d\geq 0$ and for some $b\in K^{\times}$ strongly indivisible, then we have
$$[K_{2^m}(\sqrt[2^n]{a}): K_{2^m}]=2^{\max(0,n-d)}$$
unless we are in the case $K_4\neq K_{2^\infty}$ and $K(\sqrt{b})\subseteq K_{2^{\infty}}$ and  $m\geq s+1$, in which we have
$$[K_{2^m}(\sqrt[2^n]{a}): K_{2^m}]=2^{\max(0,n-d-1)}\,.$$
\item If $a=-b^{2^d}$ for some $d>0$ and for some $b\in K^{\times}$ strongly indivisible, then we have
$$[K_{2^m}(\sqrt[2^n]{a}): K_{2^m}]=[K_{2^m}(\sqrt[2^n]{-a}): K_{2^m}]$$
with the following exceptions:
\begin{itemize}
\item If $m=n=1$ we have $[K(\sqrt{-a}): K]=1$ and $[K(\sqrt{a}): K]=2$.
\item If $K_4\neq K_{2^\infty}$ and $m=n\geq s$ and $[K_{2^m}(\sqrt[2^n]{-a}): K_{2^m}]=1$ then we have $[K_{2^m}(\sqrt[2^n]{a}): K_{2^m}]=2$.
\item If $K_4\neq K_{2^\infty}$, $K(\sqrt{b})\subseteq K_{2^{\infty}}$, $m=n=s=d+1$ 
and $[K_{2^m}(\sqrt[2^n]{-a}): K_{2^m}]=2$ then we have $[K_{2^m}(\sqrt[2^n]{a}): K_{2^m}]=1$.
\end{itemize}
\end{enumerate}
\end{thm}

\begin{proof}[Proof: Proof of 1.] By combining lemma~\ref{KTtower} and lemma~\ref{squarespecial} we have 
\begin{equation*}
[K_{2^m}(\sqrt[2^n]{b}): K_{2^m}]= \left\{
\begin{array}{ll}
\vspace{0.2cm}
2^{n-1} &  \text{if $K(\sqrt{b})\subseteq K_{2^{\infty}}$ and $m\geq s+1$}\\
2^n &  \text{otherwise}\\
\end{array}
\right.
\end{equation*}
Then it suffices to notice that $K_{2^m}(\sqrt[2^n]{a})=K_{2^m}(\sqrt[2^{n-d}]{b})$ if $n>d$ and $K_{2^m}(\sqrt[2^n]{a})=K_{2^m}$ if $n\leq d$.

\emph{Proof of 2.} Let $h\geq 0$ be such that $2^h=[K_{2^m}(\sqrt[2^n]{-a}): K_{2^m}]$.
 If $m=1$ then $n=1$ so we have $K_{2}(\sqrt{-a})=K_{2}$ and the degree of $K_{2}(\sqrt{a})=K_{4}$ over $K_2$ is $2$. Now let $m\geq 2$. By lemma~\ref{cyclotomic} we have $K_{2^m}(\sqrt[2^n]{-1})\subseteq K_{2^m}$ unless $K_4\neq K_{2^\infty}$ and $n=m\geq s$. Thus apart from this case we know that 
$K_{2^m}(\sqrt[2^n]{a})=K_{2^m}(\sqrt[2^n]{-a})$ and the requested degree is $2^h$.
Suppose from now on that $K_4\neq K_{2^\infty}$ and $n=m\geq s$.

By  lemma~\ref{cyclotomic} we have $[K_{2^m}(\sqrt[2^n]{-1}):K_{2^m}]=2$. If $h=0$ the requested degree is $2$ because $K_{2^n}(\sqrt[2^n]{a})=K_{2^n}(\sqrt[2^n]{-1})$. If $h>1$ the requested degree is $2^h$ by lemma~\ref{KTdifferent}.
We are left with the case $h=1$, in which the two fields $K_{2^n}(\sqrt[2^n]{-a})$ and $K_{2^n}(\sqrt[2^n]{-1})=K_{2^{n+1}}$ have both degree $2$ over $K_{2^n}$. If these two fields are different then they are linearly disjoint and the requested degree is $2$ by lemma~\ref{KTproduct}. If the two fields are equal then $a=(-a)(-1)$ is a $2^n$-th power in $K_{2^n}$ and the requested degree is $1$.

We are left to prove that the two fields $K_{2^n}(\sqrt[2^n]{-a})$ and $K_{2^{n+1}}$ coincide if and only if $K(\sqrt{b})\subseteq K_{2^{\infty}}$ and $n=s=d+1$. Suppose that the two fields coincide. Since $K_{2^{n+1}}\neq K_{2^{n}}$ we have $n>d$ and we are assuming $K_{2^n}(\sqrt[2^{n-d}]{b})=K_{2^{n+1}}$. Then we are in the special case described in \eqref{exceptional1} and $s+1\geq n+1$ must hold. 
By lemma~\ref{KTtower} we have $n-d=1$ because $n=s$ and \eqref{exceptional1} imply $K_{2^n}(\sqrt{b})\neq K_{2^n}$.
So we have $n=s=d+1$. On the other hand if $K(\sqrt{b})\subseteq K_{2^{\infty}}$ we are in the special case described in \eqref{exceptional1} and $n=s=d+1$ implies that $K_{2^n}(\sqrt[2^n]{a})=K_{2^n}(\sqrt{b})=K_{2^{n+1}}$.
\end{proof}

\section{The density of reductions such that the multiplicative order has a prescribed $\ell$-adic valuation }\label{formula1}

From now on let $K$ be a number field and let $a\in K^{\times}$. Let $n\geq 0$. We want to evaluate the  density of the set of primes $\p$ of $K$ such that the multiplicative order of the reduction of $a$ modulo $\p$ has $\ell$-adic valuation $n$, namely
$$D_{\ell}(a,n)=\dens\{\p: \ord_\ell(a \bmod \p)=n \}\,.$$
We always tacitly assume that the reduction $(a \bmod \p)$ and its multiplicative order are well-defined by excluding finitely many primes $\p$.
Note that the set of primes considered for $D_{\ell}(a,n)$ has a natural density and hence also a Dirichlet density (see \cite{Perucca} and remark~\ref{a>0}).

We may suppose that $a$ is not a root of unity, because otherwise the density is trivially either $1$ or $0$, according to whether or not the order of $a$ in $K^\times$ has $\ell$-adic valuation $n$.
Moreover since $K$ is a number field then $a\in K^{\times}$ is not divisible unless it is a root of unity of order coprime to $\ell$. Indeed, this follows from combining two facts: the units in the ring of integers of $K$ form a finitely generated abelian group whose torsion subgroup consists of roots of unity; if $a=b^{\ell^t}$ with $b\in K^\times$ then the exponents in the prime factorization of the fractional ideal generated by $a$
must be all divisible by $\ell^t$, so if $a$ is divisible all exponents are zero and $a$ is a unit. In particular we may assume that $a\xi$ is not divisible in $K$, for every root of unity $\xi\in K$.

Because of the following remark it is sufficient to calculate $D_{\ell}(a,0)$, which we will denote by $D_{\ell}(a)$:

\begin{rem}\label{a>0}
For every $n\geq 1$, we have
$D_{\ell}(a,n)= D_{\ell}(a^{\ell^n},0)-D_{\ell}(a^{\ell^{n-1}},0)\,.$
For $\ell=2$, we have $D_2(a,1)=D_2(-a,0)$ and $D_2(a,n)=D_2(-a, n)$ for every $n\geq 2$.
\end{rem}
\begin{proof}
For the $\ell$-adic valuation of the order we have  
$$\ord_{\ell}(a^{\ell^n} \bmod \p)=\max\big(0, \ord_{\ell}(a \bmod \p)-n\big)$$ and the first assertion follows. Recall that $(-1 \bmod \p)$ is the only element of order $2$ in the multiplicative group of the residue field at $\p$. Then we have either $\ord_{2}(-a \bmod \p)=\ord_{2}(a \bmod \p)\geq 2$ or $\{\ord_{2}(a \bmod \p), \ord_{2}(-a \bmod \p)\}=\{0,1\}$ so  the second assertion follows.
\end{proof}

The value of $D_{\ell}(a)$ can be expressed with the degrees of certain cyclotomic and cyclic Kummer extensions of $K$ (in the same way as for $K=\Q$):

\begin{lem}\label{sum} We have
\begin{equation}\label{summ}
D_{\ell}(a)=\sum_{i\geq 0}\;\Big(\; \frac{1}{[K_{\ell^i}(\sqrt[\ell^{i}]{a}):K]}-\frac{1}{[K_{\ell^{i+1}}(\sqrt[\ell^{i}]{a}):K]}\; \Big)\,.
\end{equation}
\end{lem}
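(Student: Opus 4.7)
The plan is to partition the primes $\p$ with $\ord_\ell(\a \bmod \p)=0$ according to the integer $i:=v_\ell(N\p-1)$, and to compute the density of each piece via the Chebotarev density theorem applied simultaneously to the Galois extensions $K_{\ell^i}(\sqrt[\ell^i]{\a})/K$ and $K_{\ell^{i+1}}(\sqrt[\ell^i]{\a})/K$.

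The two key ingredients are as follows. First, for all but finitely many $\p$, the residue field $k_\p$ has size $q:=N\p$ and multiplicative group cyclic of order $q-1$; setting $M:=v_\ell(q-1)$, the subgroup $(k_\p^\times)^{\ell^M}$ of index $\ell^M$ consists exactly of the elements whose order is coprime to $\ell$. Hence $\ord_\ell(\a\bmod\p)=0$ if and only if $\a$ is an $\ell^M$-th power in $k_\p^\times$. Second, a standard Chebotarev-theoretic reading shows that for $\p$ unramified in $L=K_{\ell^j}(\sqrt[\ell^i]{\a})$ (with $j\geq i$), $\p$ splits completely in $L/K$ if and only if $\zeta_{\ell^j}\in k_\p$ (equivalently, $\ell^j\mid q-1$) and $\a\in(k_\p^\times)^{\ell^i}$: once $\zeta_{\ell^i}$ and a single $\ell^i$-th root of $\a$ reduce into $k_\p$, all $K$-conjugates of $\sqrt[\ell^i]{\a}$ also reduce into $k_\p$, so the residue extension is trivial.

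Define $S_i:=\{\p : v_\ell(q-1)=i \text{ and } \a\in(k_\p^\times)^{\ell^i}\}$. The first observation shows that $\{\p:\ord_\ell(\a\bmod\p)=0\}$ equals the disjoint union $\bigsqcup_{i\geq 0} S_i$ up to finitely many primes. Since $K_{\ell^i}(\sqrt[\ell^i]{\a})\subseteq K_{\ell^{i+1}}(\sqrt[\ell^i]{\a})$, the second observation together with Chebotarev yields
\[
\dens(S_i)=[K_{\ell^i}(\sqrt[\ell^i]{\a}):K]^{-1}-[K_{\ell^{i+1}}(\sqrt[\ell^i]{\a}):K]^{-1},
\]
and summing over $i$ gives the formula. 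The only subtlety is the infinite summation: I would handle it by noting that primes belonging to $\bigsqcup_{i>N}S_i$ satisfy $v_\ell(q-1)\geq N+1$, so they split in $K_{\ell^{N+1}}$, and their density is therefore bounded by $[K_{\ell^{N+1}}:K]^{-1}$, which tends to $0$ as $N\to\infty$ since $K$ is a number field and hence $[K_{\ell^\infty}:K]=\infty$. Thus $\dens(\bigcup_{i\leq N}S_i)\to\dens(\a,0)$, and the partial sums of the right-hand side converge to the same limit. The main (mild) obstacle is really only this convergence bookkeeping; the rest of the argument is the standard translation between splitting behaviour of $\p$ and $\ell^i$-th power residues modulo $\p$.
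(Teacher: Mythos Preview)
Your proof is correct and follows essentially the same route as the paper: the partition by $i=v_\ell(N\p-1)$ is exactly the paper's decomposition $S=\bigsqcup S_i$ (where $S_i$ consists of those $\p$ for which $i$ is the largest integer with $\zeta_{\ell^i}\in k_\p$), and your Chebotarev computation of $\dens(S_i)$ together with the tail estimate via $[K_{\ell^{N+1}}:K]^{-1}\to 0$ matches the paper's argument line by line.
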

\begin{proof}
Recall that there are only finitely many primes of $K$ that ramify in the field $\cup_{i\geq 0}\, K_{\ell^i}(\sqrt[\ell^{i}]{a})$.
In what follows, we tacitly exclude the finitely many primes $\p$ of $K$ that ramify and those for which the reduction $(a \bmod \p)$ or its multiplicative order is not well-defined.
Let $S$ be the set of primes $\p$ of $K$ such that $(a \bmod \p)$ has order coprime to $\ell$, or equivalently such that $(a \bmod \p)$ has some $\ell^n$-th root in the residue field $k_\p$ for every $n\geq 1$. 
Then we can write $S$ as a disjoint union $S=\cup_{i\geq 0}\, S_i$, where $S_i$  is the subset of $S$ defined by the condition $k_\p\cap \mu_{\ell^\infty}=\mu_{\ell^i}$. The set  $S_i$ then exactly contains the primes which split completely in $K_{\ell^i}(\sqrt[\ell^{i}]{a})$ but do not in $K_{\ell^{i+1}}(\sqrt[\ell^{i}]{a})$. This shows that $S_i$ has a natural density, and we already pointed out that $S$ has a natural density. By the Chebotarev Density Theorem, what we have to prove is the equality
$$\dens(S)=\sum_{i\geq 0} \dens(S_i)\,.$$
For every $n$, we have $\cup_{i\leq n} S_i \subseteq S$ therefore $\dens(S)\geq \sum_{i\leq n} \dens(S_i)$ and in the limit in $n$ we obtain one inequality. 
For the other inequality, remark that $\cup_{i> n} S_i$ is contained in the set of primes of $K$ which split completely in $K_{\ell^{n}}$, and by the Chebotarev Density Theorem this last set has a density that tends to zero as $n$ tends to infinity.

\end{proof}

The rest of this section is devoted to evaluating the density $D_{\ell}(a)$ in the various cases. We use formula \eqref{summ} of lemma~\ref{sum} to express the density. We refer to lemma~\ref{cyclotomic} for the degree of the cyclotomic extension, and we refer to theorems~\ref{odd_or_i}~and~\ref{case_inotK} for the relative degree of the Kummer extension over the cyclotomic extension.

\subsection{The case $\ell$ odd or $\zeta_4\in K$}

\begin{thm}\label{densityodd}
Let $\ell$ be odd, or let $\ell=2$ and $\zeta_4\in K$. Let $t\geq 1$ be the greatest integer such that $K_{\ell}=K_{\ell^t}$. 

\begin{enumerate}
\item If $K\neq K_{\ell}$, then we have

$$D_{\ell}(a)=\left\{ \begin{array}{lll}
\vspace{0.2cm}
1-\frac{1}{[K_{\ell}:K]} \big(1- \frac{\ell}{\ell+1}\cdot\ell^{d-t}\big) & & \text{if}\;\;\; d\leq t \\
1-\frac{1}{[K_{\ell}:K]}\cdot \frac{1}{\ell+1}\cdot \ell^{t-d} & & \text{if}\;\;\; d> t\\
\end{array}
\right.
$$

where $d\geq 0$ is the greatest integer such that $a$ is an ${\ell^d}$-th power in $K$.

\item If $K=K_{\ell}$, then the following holds: 
\begin{enumerate}
\item[(i)] If $a$ is the ${\ell^d}$-th power of a strongly indivisible element, then we have
$$D_{\ell}(a)=\left\{ \begin{array}{lll}
\vspace{0.2cm}
\frac{\ell}{\ell+1}\cdot\ell^{d-t} & & \text{if}\;\;\; d\leq t \\
1-\frac{1}{\ell+1}\cdot \ell^{t-d} & & \text{if}\;\;\; d> t\\
\end{array}
\right.
$$
\item[(ii)] If $a$ is the ${\ell^d}$-th power of a strongly indivisible element times a root of unity of order $\ell^r$ with $r>\max(0,t-d)$, then we have
$$D_{\ell}(a)=\frac{\ell}{\ell+1}\cdot \ell^{-2r+t-d}\,.$$
\end{enumerate}
\end{enumerate}
\end{thm}
\begin{proof}[Proof: Proof of 1.] 
To ease notation, we denote by $\delta$ the inverse of ${[K_{\ell}:K]}$. 
So for every $i\geq t$ the inverse of ${[K_{\ell^{i}}:K]}$ equals ${\delta}\cdot \ell^{t-i}$.
For every $1\leq i\leq t-1$ the $i$-th summand of \eqref{summ} is zero because the two cyclotomic extensions are the same. We also have the equalities ${[K_{\ell^i}(\sqrt[\ell^{i}]{a}):K_{\ell^i}]}={[K_{\ell^{i+1}}(\sqrt[\ell^{i}]{a}):K_{\ell^{i+1}}]}=\ell^{\max(0,i-d)}\,.$ 
So for $d\leq t$ we get the following expression:
$$D_{\ell}(a)=(1-\delta)+ \sum_{i\geq t} \delta (\ell-1)  \ell^{t-i-1}\cdot \ell^{d-i}= 
1-\delta\cdot \Big(1- \frac{\ell}{\ell+1}\cdot \ell^{d-t}\Big)\,.$$
For $d> t$ we have instead the following expression:
$$D_{\ell}(a)=(1-\delta)+ \sum_{i=1}^{d-1}  \delta (\ell-1) \ell^{t-i-1}+ \sum_{i\geq d} \delta (\ell-1) \ell^{t-i-1} \cdot  \ell^{d-i}=1-\delta\cdot  \frac{\ell^{t-d}}{\ell+1}\,.$$
 \emph{Proof of of 2.} The proof of (i) goes exactly as in the previous case by taking $\delta=1$. We are left to prove (ii). For every $i\leq t-1$ the $i$-th summand of \eqref{summ} is zero because the two cyclotomic extensions are the same.  
For every $i\geq t$ we have ${[K_{\ell^{i}}:K]}=\ell^{i-t}$. We also know the following equalities: 
$${[K_{\ell^i}(\sqrt[\ell^i]{a}): K_{\ell^i}]}=\ell^{\max (0, i-d,r)}\qquad\text{and}\qquad {[K_{\ell^{i+1}}(\sqrt[\ell^i]{a}): K_{\ell^{i+1}}]}=\ell^{\max(0, i-d,r-1)}\,.$$
Recall that by assumption we have $t<r+d$. For every $t\leq i\leq r+d-1$ the $i$-th summand of \eqref{summ} is zero because the degrees of the two Kummer extensions are $\ell^r$ and $\ell^{r-1}$ respectively. We may write the remaining terms of \eqref{summ} as follows:
$$D_{\ell}(a)=\sum_{i\geq r+d}\;\;\big( \ell^{t-i}\cdot \ell^{d-i} -\ell^{t-i-1}\cdot \ell^{d-i}\;\big)
=\ell^{-2r+t-d}\cdot \frac{\ell}{\ell+1}\,.$$
\end{proof}

\subsection{The case $\ell=2$ and $\zeta_4\notin K$}
\begin{thm} 
Let $\ell=2$ and suppose that $\zeta_4\notin K$. Let $s\geq 2$ be the greatest integer such that $K_4=K_{2^s}$.
\begin{enumerate}
\item If $a=b^{2^d}$ for some $d\geq 0$ and for some $b\in K^\times$ strongly indivisible, then we have
\begin{equation*}
D_2(a)=\left\{ \begin{array}{lll}
\vspace{0.2cm}
\frac{1}{4}+\frac{\epsilon}{3}\cdot 2^{-s} & &\text{if \; $d=0$}\\
\vspace{0.2cm}
\frac{1}{2}+\frac{\epsilon}{3}\cdot 2^{d-s} & &\text{if \; $0<d<s$}\\
\vspace{0.2cm}
1- \frac{\epsilon}{6}\cdot 2^{s-d} && \text{if \; $d\geq s$}\\
\end{array}
\right.
\end{equation*}
where $\epsilon=\frac{1}{2}$ if $K(\sqrt{b})\subseteq K_{2^{\infty}}$ and $\epsilon=1$ otherwise.

\item If $a=-b^{2^d}$ for some $d>0$ and for some $b\in K^\times$ strongly indivisible, then we have

\begin{equation*} 
D_2(a)=\left\{ \begin{array}{lll}
\vspace{0.2cm}

D_2(-a)-\frac{1}{2}=\frac{\epsilon}{3}\cdot 2^{d-s} & & \text{if \; $0<d<s-1$}\\
\vspace{0.2cm}

D_2(-a)-\frac{\epsilon}{2}=\frac{1}{6\epsilon} & & \text{if \; $d=s-1$}\\
\vspace{0.2cm}

D_2(-a)-1+ \frac{\epsilon}{4}\cdot 2^{s-d}=\frac{\epsilon}{12}\cdot 2^{s-d} & & \text{if \; $d\geq s$}\\
\end{array}
\right.
\end{equation*}
where $\epsilon=\frac{1}{2}$ if $K(\sqrt{b})\subseteq K_{2^{\infty}}$ and $\epsilon=1$ otherwise.
\end{enumerate}
\end{thm}

To determine the value of $\epsilon$ in the above statement we may apply the characterization given in lemma~\ref{squarespecial}.

\begin{proof}[Proof of 1.]
For $i=0$ and for $2\leq i < s$ the $i$-th summand of \eqref{summ} is zero because the two cyclotomic extensions are the same. We may calculate the summand of \eqref{summ} for $i=1$ by considering the equalities $[K(\sqrt{a}): K]=[K_{4}(\sqrt{a}): K_{4}]=2^{\max(0,1-d)}$. For every $i\geq s$ we have ${[K_{2^i}:K]}=2^{1+i-s}$. So we get the following expression:
\begin{multline}\label{last}
D_2(a)=2^{-\max(1,2-d)}+\sum_{i\geq s} 2^{s-i-1} \Big(\frac{1}{[K_{2^i}(\sqrt[2^i]{a}): K_{2^i}]}-\frac{1}{2\cdot [K_{2^{i+1}}(\sqrt[2^{i}]{a}): K_{2^{i+1}}]}\Big)\,.
\end{multline}
\emph{The case $K(\sqrt{b})\not\subseteq K_{2^{\infty}}$.} Formula \eqref{last} may be rewritten  as follows:
$$D_2(a)=2^{-\max(1,2-d)}+\sum_{i\geq s}  2^{s-i-2}\cdot 2^{\min(0, d-i)}\,.$$
If $d=0$ then we have $D_2(a)=2^{-2}+\sum_{i\geq s}  2^{s-2i-2}$ which gives $D_2(a)= \frac{1}{4}+\frac{1}{3}\cdot 2^{-s}$.
If $0<d\leq s$ then we have $D_2(a)=2^{-1}+\sum_{i\geq s}  2^{s+d-2i-2}$ which gives $D_2(a)=\frac{1}{2}+\frac{1}{3}\cdot 2^{d-s}$.
If $d> s$ we have 
$$D_2(a)=2^{-1}+\sum_{i=s}^{d-1} 2^{s-i-2} + \sum_{i\geq d}  2^{s+d-2i-2}=
 1 - \frac{1}{6} \cdot 2^{s-d}\,.$$
  \emph{The case $K(\sqrt{b})\subseteq K_{2^{\infty}}$.} We evaluate formula \eqref{last}.
If $d=0$ we have 
$$D_2(a)=2^{-2}+(2^{-1-s}-2^{-1-s})
+\sum_{i\geq s+1}  \big(2^{s-2i}-2^{s-2i-1}\big) 
=\frac{1}{4} +\frac{1}{6}\cdot 2^{-s}\,.$$
If $0<d<s$ we have
$$D_2(a)=2^{-1}+(2^{-1+d-s} -2^{d-s-1}) +\sum_{i\geq s+1}  \big(2^{s+d-2i}-2^{s+d-2i-1}\big) 
=\frac{1}{2} +\frac{1}{6}\cdot 2^{d-s}\,.$$
If $d=s$ we have $D_2(a)=2^{-1}+2^{-2} +\sum_{i\geq s+1}  \big(2^{2s-2i}-2^{2s-2i-1}\big)= 
\frac{11}{12}\,.$
If $d\geq s+1$ then we have
$$D_2(a)=2^{-1}+2^{-2} +\!\sum_{i=s+1}^{d} \big(2^{s-i-1}-2^{s-i-2}\big)+\!\sum_{i\geq d+1}  \big(\;2^{s+d-2i}-2^{s+d-2i-1}\big) 
=1-\frac{1}{12}\cdot 2^{s-d}\,.
$$
\end{proof}

\begin{proof}[Proof of 2.] We have already evaluated the density for $-a=b^{2^d}$ so it is convenient to consider the difference $\Delta:=D_2(a)-D_2(-a)$. We will then only consider the few summands of \eqref{summ} that are different with respect to those of $-a$. 
Since $d\geq 1$ we have ${[K(\sqrt{-a}): K]}=1$ and ${[K(\sqrt{a}): K]}={2}$. Moreover for every $s\leq i\leq d$ we have ${[K_{2^i}(\sqrt[2^i]{-a}): K_{2^i}]}=1$ and ${[K_{2^i}(\sqrt[2^i]{a}): K_{2^i}]}=2$. 

\noindent \emph{The case $K(\sqrt{b})\not\subseteq K_{2^{\infty}}$.} For $0<d<s$ we have $\Delta=-\frac{1}{2}$, and for $d\geq s$ we have
$$\Delta=-{2^{-1}}+\sum_{i=s}^{d} 2^{s-i-1}(2^{-1}-1)
=-1+ \frac{1}{4}\cdot 2^{s-d}\,.$$
 \emph{The case $K(\sqrt{b})\subseteq K_{2^{\infty}}$.} If $d\geq s-1$ then some more summands of \eqref{summ} have to be considered. If $d\geq s$ then for $i=d+1\geq s+1$ we have ${[K_{2^i}(\sqrt[2^i]{-a}): K_{2^i}]}=1$ and ${[K_{2^i}(\sqrt[2^i]{a}): K_{2^i}]}=2$. So for $d\geq s$ we have 
$$\Delta=\Big(-1+ \frac{1}{4}\cdot 2^{s-d}\Big)+(2^{-1}-1)\cdot 2^{s-d-2}=
-1+ \frac{1}{8}\cdot 2^{s-d}\,.$$
If $d=s-1$ then for $i=s=d+1$ we have ${[K_{2^i}(\sqrt[2^i]{-a}): K_{2^i}]}=2$ and ${[K_{2^i}(\sqrt[2^i]{a}): K_{2^i}]}=1$. So for $d=s-1$ we have $\Delta=\big(-\frac{1}{2}\big)+(1-2^{-1})\cdot 2^{-1}=-\frac{1}{4}\,.$
\end{proof}

In table~\ref{table} we list the values of the density $D_{2}(a,n)$ in the various cases for $K=\Q$. It is convenient to write $a=\pm b^{2^d}$ for some $d>0$ and some rational number $b>0$ which is not a square in $\Q$. The density then depends on the following data: the value of $d$; the sign of $a$; whether $\mathbb Q(\sqrt{b})$ is or is not $\mathbb Q(\sqrt{2})$. The first line of the table shows the generic case. As it may be expected, for $a=b^{2^d}$ the density $D_2(a)$ grows with $d$ and tends to one as $d$ tends to infinity. Consequently for $a=-b^{2^d}$ the density $D_2(a)$ 
tends to zero as $d$ tends to infinity. In any case the density $D_2(a,n)$ tends to zero as $n$ tends to infinity.

We tested our formulas for $D_{\ell}(a)$ with Sage~\cite{sage}: we computed an approximate natural density (considering only the primes with norm $<10^5$) and we found an error smaller than $10^{-2}$ for all examples in tables~\ref{Q3},~\ref{Q2},~\ref{Q22}. 

\begin{table}[ht]\caption{The values of $D_2(a,n)$ for the field $K=\Q$.}
\label{table}
\begin{center}
\begin{tabular}{|l|l||l|l|l|l|}
\hline
  \multirow{2}{*}{ $\mathbb Q(\sqrt{b})$}   & \multirow{2}{*}{$d$}  & $D_2(|a|,0)$ & $D_2(-|a|,0)$ &  \multirow{2}{*}{$D_2(a, 2)$} &  $D_2(a, n)$\\
  & &$=D_2(-|a|,1)$& $=D_2(|a|,1)$ & & for $n\geq 3$\\
 \hline
$\neq \mathbb Q(\sqrt{2})$ & $\geq 0$ & $1-2/3 \cdot 2^{-d}$ & $1/3 \cdot 2^{-d}$ & $1/6\cdot  2^{-d}$ & $2/3\cdot  2^{-d-n}$\\
\cline{1-6}
\multirow{3}{*}{$=\mathbb Q(\sqrt{2})$} & $0$ & $7/24$ & $7/24$ & $1/3$ & \multirow{3}{*}{$1/3\cdot  2^{-d-n}$}\\
\cline{2-5}
 & $1$ & $7/12$ & $1/3$ & \multirow{2}{*}{$1/12\cdot  2^{-d}$} &\\
\cline{2-4}
 & $\geq 2$ & $1-1/3\cdot 2^{-d}$ & $1/6\cdot  2^{-d}$ & &\\
\hline
\end{tabular}
\end{center}
\end{table}

\begin{table}[ht]\caption{Examples for $\ell=3$.}\label{Q3}
\begin{center}
\begin{tabular}{|cc||cc|}
\hline
 $a\in \Q(\sqrt{3})$ & $D_3(a)$ & $a\in \Q(\sqrt{-3})$ & $D_3(a)$\\
\hline
 2  & 5/8 &  2  & 1/4 \\
\hline
 8  & 7/8 &  8  & 3/4 \\
\hline
 $2^9$ & 23/24 & $2^9$ & 11/12  \\
\hline
 3 & 5/8 & $2\cdot \zeta_3$ & 1/4  \\
\hline
 27  & 7/8 &  $8\cdot \zeta_3$  & 1/12  \\
\hline
 2/3 & 5/8 & $2^9\cdot \zeta_3$  & 1/36 \\
\hline
\end{tabular}
\end{center}
\end{table}
\begin{table}[ht]\caption{Examples for  $\ell=2$ and $\zeta_4\in K$.}
\label{Q2}
\begin{center}
\begin{tabular}{|cc|cc|cc|}
\hline
$a\;\in\, \Q(\zeta_4)$ & $D_2(a)$ & $a\;\in \,\Q(\zeta_4)$ & $D_2(a)$ & $a\;\in \,\Q(\zeta_4)$ & $D_2(a)$ \\
\hline
3 & 1/6   &
 -3 & 1/6 &
$\pm 3\zeta_4$ & 1/6 \\ 
\hline
9  & 1/3  &
-9 & 1/3  &
$\pm 9\zeta_4$ & 1/12 \\  
\hline
81   & 2/3   & 
-81  & 1/6     &
$\pm 81\zeta_4$  & 1/24 \\ 
\hline
2    & 1/12 &
 -2   & 1/12 &
 $\pm 2\zeta_4$   & 1/3  \\ 
\hline
4 & 1/6     &
-4  & 2/3   &
$\pm 4\zeta_4$  & 1/24   \\
\hline
16  & 5/6 &
 -16 & 1/12 &
$\pm 16\zeta_4$ & 1/48  \\
\hline
\end{tabular}
\end{center}
\end{table}

\begin{table}[!ht]\caption{Examples for  $\ell=2$ and $\zeta_4\notin K$.}\label{Q22}
\begin{center}
\begin{tabular}{|cc|cc||cc|cc|}
\hline
$a\in \Q(\sqrt{3})$ & $D_2(a)$ & $a\in \Q(\sqrt{3})$ & $D_2(a)$ & $a\in \Q(\sqrt{\pm 2})$ & $D_2(a)$ & $a\in \Q(\sqrt{ \pm2})$ & $D_2(a)$ \\
\hline
3 &  2/3 & 
-3 & 1/6 & 
3 & 7/24    &
 -3 & 7/24  \\ 
\hline
9  & 5/6 &
-9 & 1/12 & 
9  & 7/12  &
-9 & 1/12 \\  
\hline
81   & 11/12   & 
-81  & 1/24   & 
81   & 2/3   & 
-81  & 1/6    \\ 
\hline
2    & 7/24 &
 -2   & 7/24 &
  $\pm 2$   & 7/12 &
 $\mp 2 $    & 1/12\\ 
  \hline
4 & 7/12  &
-4  & 1/3  & 
4 & 2/3   & 
-4  & 1/6  \\ 
\hline
16  & 11/12  & 
 -16 & 1/24 & 
 16  & 5/6   &
 -16 & 1/12 \\ 
\hline
\end{tabular}
\end{center}
\end{table}

\newpage

\end{document}